\documentclass{amsart}
\usepackage{amssymb,amsmath,epsfig,mathrsfs, enumerate}
\usepackage{graphicx}
\usepackage{fancyhdr}
\usepackage{color,soul}
\fancyheadoffset{0cm}
\usepackage[margin=3cm]{geometry}
\usepackage{hyperref}
\hypersetup{
 colorlinks   = true,
 urlcolor     = blue,
 citecolor   = red ,
 bookmarksopen=true
}

\newtheorem{theorem}{Theorem}[section]
\theoremstyle{plain}

\newtheorem{definition}[theorem]{Definition}

\numberwithin{equation}{section}
\title[]{Existence and Multiplicity results for Weakly coupled system of Pucci's extremal operator.}
\author{Karan Rathore}
\address[Karan Rathore]{VNIT, Nagpur, India-440010} \email{rathorekaran020@gmail.com, ds23mth004@students.vnit.ac.in}
\author{Mohan Mallick}
\address[Mohan Mallick]{VNIT, Nagpur, India-440010} \email{mohan.math09@gmail.com, mohanmallick@mth.vnit.ac.in}
\subjclass[2010]{Primary 35J25; 35J60; Secondary 35D40; 35A01}
\date{}
\keywords{Nonlinear elliptic equations, sub and supersolution, fixed point, multiple positive solutions}
\begin{document}

\begin{abstract}
In this work, we investigate the existence of multiple positive solutions for a weakly coupled system of nonlinear elliptic equations governed by Pucci’s extremal operators. Specifically, we consider the system:

\[
\begin{cases}
-\mathcal{M}_{\lambda_1,\Lambda_1}^+(D^2u_1) = \mu f_1(u_1, u_2, \dots, u_n), & \text{in } \Omega, \\
-\mathcal{M}_{\lambda_2,\Lambda_2}^+(D^2u_2) = \mu f_2(u_1, u_2, \dots, u_n), & \text{in } \Omega, \\
  \vdots \\
-\mathcal{M}_{\lambda_n,\Lambda_n}^+(D^2u_n) = \mu f_n(u_1, u_2, \dots, u_n), & \text{in } \Omega, \\
u_1 = u_2 = \dots = u_n = 0, & \text{on } \partial\Omega,
\end{cases}
\]

where \( \mathcal{M}_{\lambda,\Lambda}^+ \) represents the Pucci extremal operator, \( \Omega \) is a bounded domain in \( \mathbb{R}^N \) with smooth boundary, and the nonlinear functions \( f_i: [0, \infty)^n \to [0, \infty) \) belong to the \( C^{1,\alpha} \) class. Our main results establish the existence and multiplicity of solutions for sufficiently large values of the parameter \( \mu > 0 \). The analysis relies on the method of sub and supersolutions, in conjunction with fixed-point arguments and bifurcation techniques.
\end{abstract}

\maketitle


\section{Introduction}
The study of existence and multiplicity of solutions for the system:
\begin{equation}\label{problem1}
\left\{
\begin{aligned}
-\mathcal{M}_{\lambda_1,\Lambda_1}^+(D^2u_1) &= \mu((u_1)^{1-\alpha_1}+f_1(u_2)) &&\text{in } \Omega,\\
-\mathcal{M}_{\lambda_2,\Lambda_2}^+(D^2u_2) &= \mu((u_2)^{1-\alpha_2}+f_2(u_3)) &&\text{in } \Omega,\\
& \vdots \\
-\mathcal{M}_{\lambda_n,\Lambda_n}^+(D^2u_n) &= \mu((u_n)^{1-\alpha_n}+f_n(u_1)) &&\text{in } \Omega,\\
u_1 = u_2 = \dots = u_n &= 0 &&\text{on } \partial\Omega,
\end{aligned}
\right.
\end{equation}
was discussed in \cite{mallick2024multiplicity} using the three-solution theorem. Here, \( \Omega \) is a smooth and bounded domain in \( \mathbb{R}^N \), \( \mu \) is a positive parameter, \( \alpha_i \in (0,1) \), and \( f_i \in C^{\alpha}[0,\infty) \) for \( i=1,2,\dots,n \). The Pucci extremal operator is defined as:
\begin{equation}\label{pucci}
\mathcal{M}^{\pm}_{\lambda,\Lambda}(M)=\Lambda\sum_{\pm e_{i}>0}e_{i}+\lambda\sum_{\pm e_{i}<0}e_{i},
\end{equation}
where \( M \) is an \( N\times N \) symmetric matrix and \( e_i \) are its eigenvalues.

Extending this study to a general function \( f_i(u_1,u_2,\dots,u_n) \) of sublinear growth, we consider the system \eqref{problem3}.
\begin{equation}\label{problem3}
\left\{
\begin{aligned}
-\mathcal{M}_{\lambda_1,\Lambda_1}^+(D^2u_1) &= \mu f_1(u_1, u_2, \dots, u_n) &&\text{in } \Omega,\\
-\mathcal{M}_{\lambda_2,\Lambda_2}^+(D^2u_2) &=\mu f_2(u_1, u_2, \dots, u_n) &&\text{in } \Omega,\\
& \vdots \\
-\mathcal{M}_{\lambda_n,\Lambda_n}^+(D^2u_n) &= \mu f_n(u_1, u_2, \dots, u_n) &&\text{in } \Omega,\\
u_1 = u_2 = \dots = u_n &= 0 &&\text{on } \partial\Omega,
\end{aligned}
\right.
\end{equation}
Our focus is on obtaining existence and multiplicity results for \eqref{problem3} involving non-divergence operators, utilizing the three-solution theorem. For similar work in the case of divergence form of operator one can see the work by \cite{mallick2018bifurcation,ali2007positive}. This paper is structured as follows: Section 2 presents preliminary results on Pucci's extremal operator. Section 3 establishes existence and multiplicity results by constructing two sub and supersolutions. Finally, Section 4 provides an example demonstrating at least three positive solutions for \eqref{problem3} under suitable conditions.

\section{Preliminaries}
\noindent In this section we collect definitions and results which will be used throughout the article. 

\begin{definition}[see \cite{MHP},\cite{ishii1991viscosity}]\label{viscosity}
A vector valued function $u=(u_1,u_2,...,u_n)\in C(\overline{\Omega})\times C(\overline{\Omega})\times\dots\times  C(\overline{\Omega})$ is defined a viscosity subsolution (resp., supersolution) of \eqref{problem3} if for any $\phi\in C^{2}(\Omega)$ such that $u_{j}-\phi$~({for some $j=1,2...,n$}) attains a maximum (resp., minimum) at some point $x_{0}\in \Omega$ then we have
\begin{equation*}
-\mathcal{M}_{\lambda_{j},\Lambda_{j}}^+ (D^2\phi(x_{0}))\leq\mu f_j(u_{1}(x_{0}),u_{2}(x_{0}),\dots,u_{n}(x_{0}))~~\Big(\text{resp.,}~~\geq \mu f_j(u_{1}(x_{0}),u_{2}(x_{0}),\dots,u_{n}(x_{0}))\Big).
\end{equation*}
$u$ is solution if it is both subsolution and supersolution at the same time. Moreover, $u$ is called strict subsolution (resp., supersolution) of \eqref{problem3} if $u$ is a subsolution (resp., supersolution) of
\begin{equation*}
\left\{
\begin{aligned}
-\mathcal{M}_{\lambda_1,\Lambda_1}^+(D^2u_1)&=\mu f_1(u_1,u_2,\dots,u_n)+h_1~~~&&\rm{in}~~\Omega,\\
-\mathcal{M}_{\lambda_2,\Lambda_2}^+(D^2u_2)&=\mu f_2(u_1,u_2,\dots,u_n)+h_2~~~&&\rm{in}~~\Omega,\\
~~~~~~~~\vdots&=~~~~~~~~~~~~ \vdots\\
-\mathcal{M}_{\lambda_n,\Lambda_n}^+(D^2u_n)&=\mu f_n(u_1,u_2,\dots,u_n)+h_{n}~~~&&\rm{in}~~\Omega,
\end{aligned}
\right.
\end{equation*}
for some $h=(h_1,h_2,...,h_n)\in C(\overline{\Omega})\times\dots\times  C(\overline{\Omega})$ such that $h_j(x)<0$ (resp.,~$h_{j}(x)>0$) on $\bar{\Omega}$ for all $j=1,2,...,n.$
\end{definition}

\noindent We also recall fundamental eigenvalue results for Pucci’s operator:
\begin{theorem}[Proposition 1.1\cite{MR2124162}]\label{Pr2.5}
There exist a positive constant~$\mu^{+}_{1,i}$ and a function $\phi^{+}_{1,i}\in C^{2}(\Omega)\cup C(\bar{\Omega})$ such that:\\
\begin{equation}\label{28}
\left\{
\begin{aligned}{}
-\mathcal{M}^{+}_{\lambda_i,\Lambda_i}(D^{2}\phi^{+}_{1,i})&=\mu^{+}_{1,i} \phi^{+}_{1,i}~~\text{in}~\Omega\\
\phi^{+}_{1,i}&=0~\text{on}~\partial\Omega.
\end{aligned}
\right.
\end{equation}
Furthermore,~$\phi^{+}_{1,i}>0$ in $\Omega.$
\end{theorem}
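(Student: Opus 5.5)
The plan is to obtain $\mu^{+}_{1,i}$ and $\phi^{+}_{1,i}$ as the leading point of the spectrum of the positively homogeneous, convex, fully nonlinear operator $F(M)=\mathcal{M}^{+}_{\lambda_i,\Lambda_i}(M)$, following the Krein--Rutman-type approach of Quaas--Sirakov. Since $i$ is fixed, I drop it and write $\mathcal{M}^+=\mathcal{M}^+_{\lambda,\Lambda}$.

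The first step is the solution operator. For $f\in C(\bar\Omega)$, the problem $-\mathcal{M}^+(D^2u)=f$ in $\Omega$, $u=0$ on $\partial\Omega$, has a unique viscosity solution. Existence comes from Perron's method, with barriers supplied by the smoothness of $\partial\Omega$ via the exterior sphere condition. Uniqueness comes from the comparison principle, which holds because $\mathcal{M}^+$ is uniformly elliptic and independent of $u$ and $x$. Set $T f:=u$. Then:
\begin{enumerate}[(i)]
\item $T$ is positively homogeneous, $T(tf)=tTf$ for $t\ge0$;
\item $T$ is order preserving and strongly positive: if $f\ge0$ and $f\not\equiv0$, then $Tf>0$ in $\Omega$ with $\partial_\nu Tf<0$ on $\partial\Omega$, by the strong maximum principle and Hopf lemma for Pucci operators;
\item $T$ is compact from $C(\bar\Omega)$ into itself, by the global $C^{\alpha}$ (indeed $C^{1,\alpha}$) estimates of Krylov--Safonov and Caffarelli.
\end{enumerate}
Since $\mathcal{M}^+$ is convex, Evans--Krylov theory upgrades solutions with Hölder data to $C^{2,\alpha}_{loc}$. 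This gives $\phi\in C^2(\Omega)\cap C(\bar\Omega)$, which is what the statement means by $C^2(\Omega)\cup C(\bar\Omega)$.

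The second step produces the eigenpair by a nonlinear Krein--Rutman or Leray--Schauder continuation argument. For $\varepsilon>0$, consider the problem $u=tT(u+\varepsilon)$ on the cone $K=\{u\ge0\}$. Let $\mu_1=\sup\{t>0:\ \exists\,u>0,\ -\mathcal{M}^+(D^2u)\ge t u \text{ in }\Omega\}$. Positivity of $\mu_1$ follows by testing with the torsion function. Finiteness follows by comparing, on a small ball $B\subset\Omega$, with the explicit radial eigenfunction of $\mathcal{M}^+$ on $B$, using the maximum principle on small domains.

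The Leray--Schauder alternative applied to $t\mapsto tT(\cdot+\varepsilon)$ on $K$ gives an unbounded continuum of solutions $(t,u_\varepsilon)$. Because of how $\mu_1$ is defined, the unbounded direction must occur at some $t_\varepsilon\le\mu_1$. Normalize $v_\varepsilon=u_\varepsilon/\|u_\varepsilon\|_\infty$; homogeneity gives $v_\varepsilon=t_\varepsilon T(v_\varepsilon+\varepsilon/\|u_\varepsilon\|_\infty)$. Compactness lets us pass to the limit $\varepsilon\to0$ along a subsequence, producing $t\to\mu$ and $v_\varepsilon\to\phi$ with $\|\phi\|_\infty=1$, $\phi\ge0$ and $\phi=\mu T\phi$. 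Stability of viscosity solutions under uniform convergence justifies this limit. Strong positivity then gives $\phi>0$ in $\Omega$. Finally, $\mu>0$, since otherwise $\phi=0$.

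The main obstacle is the continuation step: showing that the continuum really is unbounded near some finite $t$ and does not stay bounded for all $t$. This requires a Leray--Schauder degree computation on the cone for a merely positively homogeneous, not linear, map. I would first try to bypass it with a direct maximum-principle characterization. Define $\mu_1$ by the sup above. Take a sequence $t_k\uparrow\mu_1$ and solve $-\mathcal{M}^+(D^2u_k)=t_ku_k+1$; the maximum principle holds below $\mu_1$, so monotone iteration from $0$ converges. One then shows $\|u_k\|_\infty\to\infty$, since otherwise $\mu_1$ could be exceeded. Normalizing $u_k$ and passing to the limit as above yields the eigenfunction.

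Since the result is quoted as Proposition 1.1 of \cite{MR2124162}, in the paper itself it suffices to cite it.
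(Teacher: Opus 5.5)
The paper gives no proof of this statement. It is imported verbatim as Proposition~1.1 of \cite{MR2124162} (with $\cup$ a typo for $\cap$, as you noticed), so your closing sentence is literally what the paper does.

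What you supply instead is a self-contained argument, and its main line is correct:
\begin{enumerate}[(a)]
\item For fixed $\varepsilon>0$, the Leray--Schauder continuum of solutions of $u=tT(u^{+}+\varepsilon)$ starts at $(0,0)$, where the map is constant and the index is $1$.
\item Every solution with $t>0$ is positive in $\Omega$ and satisfies $-\mathcal{M}^+(D^2u)=t(u+\varepsilon)\ge tu$. Hence $t\le\mu_1<\infty$ on the continuum, which must therefore be unbounded in $u$.
\item Normalizing a blow-up sequence and using compactness and continuity of $T$ gives $\phi=\mu T\phi$ with $\|\phi\|_\infty=1$, $\mu>0$, $\phi>0$. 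No limit $\varepsilon\to0$ is needed, since $\varepsilon/\|u_k\|_\infty\to0$ already for fixed $\varepsilon$.
\end{enumerate}
The citation buys brevity and whatever additional properties of the eigenpair the source proves. Your route delivers exactly what the paper later uses. It gives some eigenvalue $\mu>0$ with an eigenfunction $0<\phi\le1$, which is what the subsolution in Theorem~\ref{Th3.1} needs. By monotonicity and homogeneity of $T$ it also gives $\phi=T(\mu\phi)\le T(\mu)=\mu e_i$, with $e_i$ from \eqref{ee}; this is the bound behind the ordering $\psi\le\phi$ in Theorem~\ref{Th3.2}.

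Three points in your sketch should be corrected.

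First, what you call the main obstacle is not one. No degree theory for positively homogeneous maps is needed: the only index used is the trivial one at $t=0$, and your own bound $t\le\mu_1$ is what forces blow-up in norm.

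Second, finiteness of $\mu_1$ comes from a direct touching argument, not a small-domain maximum principle. Pick a ball $B=B_r(x_B)$ with $\bar B\subset\Omega$ and $\psi\in C^2$ with $\psi>0$ in $B$, $\psi=0$ on $\partial B$, and $-\mathcal{M}^+(D^2\psi)\le C\psi$ in $B$. The radial eigenfunction works, as does $(r^2-|x-x_B|^2)^2$ with $C$ large. Suppose $u>0$ in $\Omega$ and $-\mathcal{M}^+(D^2u)\ge tu$. Let $\tau^*$ be the largest $\tau$ with $u\ge\tau\psi$ on $\bar B$. The contact point $x_0$ is interior, because $u>0=\psi$ on $\partial B$. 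Testing $u$ from below by $\tau^*\psi$ at $x_0$ gives $t\,u(x_0)\le C\,u(x_0)$, so $t\le C$.

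Third, the proposed bypass is not a simplification and, as written, has a hole. The maximum principle below $\mu_1$ and the solvability of $-\mathcal{M}^+(D^2u)=tu+1$ for $t<\mu_1$ are themselves nontrivial theorems. Moreover, monotone iteration from $0$ needs a bounded supersolution of $-\mathcal{M}^+(D^2w)\ge tw+1$, which the definition of $\mu_1$ does not provide. Admissible $u$ are only positive in $\Omega$, so $(t'-t)u\ge1$ fails near $\partial\Omega$. Stay with the Leray--Schauder route.
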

\begin{theorem}[Theorem 17.18\cite{GT}]\label{Pr2.6}
Let $e_i\in C^2(\Omega)\cap C(\bar{\Omega})$ be the unique solution to the following problem
\begin{equation}\label{ee}
\left\{
\begin{aligned}
-\mathcal{M}_{\lambda_i,\Lambda_i}^+ (D^2e_i)&=1~~\mbox{in} ~~~\Omega;\\
e_i&=0~\mbox{on}~~\partial \Omega.
\end{aligned}
\right.
\end{equation}
\end{theorem}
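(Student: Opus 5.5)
The statement to establish is that the Dirichlet problem \eqref{ee} has a unique solution $e_i\in C^2(\Omega)\cap C(\bar\Omega)$. It is implicit that $e_i$ is positive and bounded, and those facts are what the later sub/supersolution constructions use. The plan has three parts: existence through the classical theory of fully nonlinear uniformly elliptic equations, uniqueness through the comparison principle, and positivity and boundedness through the maximum principle.

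\textbf{Existence.} I will write $F(M)=-\mathcal{M}^+_{\lambda_i,\Lambda_i}(M)-1$. From \eqref{pucci}, $\mathcal{M}^+_{\lambda,\Lambda}(M)=\sup\{\operatorname{tr}(AM): \lambda I\le A\le \Lambda I\}$. Hence $\mathcal{M}^+$ is convex in $M$, positively homogeneous, and uniformly elliptic:
\[
\lambda_i\|P\|\le \mathcal{M}^+_{\lambda_i,\Lambda_i}(M+P)-\mathcal{M}^+_{\lambda_i,\Lambda_i}(M)\le \Lambda_i\|P\|,\qquad P\ge 0 .
\]
So $F$ is concave in $D^2u$ (once the sign convention is fixed), uniformly elliptic, independent of $x$, $u$ and $Du$, and smooth in the lower-order variables. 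These are exactly the structural hypotheses of Theorem 17.18 in \cite{GT}, the Evans--Krylov solvability theorem for concave fully nonlinear equations on smooth bounded domains. I will apply it with zero boundary data, which is smooth, and obtain $e_i\in C^{2,\alpha}(\bar\Omega)$.

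The main obstacle is the a priori estimates that drive the continuity method behind that theorem. Two things need checking.
\begin{itemize}
\item \emph{$C^0$ bound.} It follows from comparison with explicit barriers. The function $w(x)=\frac{1}{2\lambda_i}(R^2-|x-x_0|^2)$, with $\Omega\subset B_R(x_0)$, has $D^2w=-\lambda_i^{-1}I$, so $-\mathcal{M}^+(D^2w)=1$; more generally, with a large-enough multiplier $w$ becomes a supersolution. The function $0$ is a subsolution.
\item \emph{Boundary gradient bound.} It uses local barriers at $\partial\Omega$, which are available because $\partial\Omega$ is smooth and so satisfies an exterior sphere condition.
\end{itemize}
The interior and boundary $C^{2,\alpha}$ estimates then come from Evans--Krylov and Krylov, using concavity.

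\textbf{Uniqueness.} Suppose $e$ and $\tilde e$ both solve \eqref{ee}. Using the sublinearity of $\mathcal{M}^+$,
\[
\mathcal{M}^+(D^2e)-\mathcal{M}^+(D^2\tilde e)\le \mathcal{M}^+(D^2(e-\tilde e)),
\]
and hence
\[
\mathcal{M}^+(D^2(e-\tilde e))\ge 0=\mathcal{M}^+(D^2e)-\mathcal{M}^+(D^2\tilde e).
\]
This says $e-\tilde e$ satisfies $\mathcal{M}^+(D^2(e-\tilde e))\ge 0$ in $\Omega$ with zero boundary values, so the maximum principle for Pucci operators gives $e-\tilde e\le 0$. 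Exchanging the roles of $e$ and $\tilde e$ gives equality.

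\textbf{Positivity and boundedness.} The strong maximum principle, applied to $-\mathcal{M}^+(D^2e_i)=1>0$, gives $e_i>0$ in $\Omega$, and Hopf's lemma gives $\partial_\nu e_i<0$ on $\partial\Omega$. The barrier $w$ above gives
\[
0<e_i\le \frac{R^2}{2\lambda_i}.
\]
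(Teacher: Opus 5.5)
Your proposal is correct and follows the paper's route: the paper simply invokes Theorem 17.18 of Gilbarg--Trudinger for existence, and your comparison argument for uniqueness and your maximum-principle argument for $e_i>0$ fill in what the paper calls evident. There are two harmless slips: your barrier has $D^2w=-\lambda_i^{-1}I$, so $-\mathcal{M}^+_{\lambda_i,\Lambda_i}(D^2w)=N$, not $1$ (it is still a supersolution, so $e_i\le R^2/(2\lambda_i)$ holds); and the concave elliptic form the cited theorem needs comes from passing to $-e_i$, which solves $\mathcal{M}^-_{\lambda_i,\Lambda_i}(D^2(-e_i))=1$, whereas merely flipping the sign of $F$ gives $\mathcal{M}^+_{\lambda_i,\Lambda_i}(D^2e_i)+1=0$, which is convex in $D^2e_i$.
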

\noindent It is evident that the solution $e_i$ satisfies $e_i\geq0.$ 

 \begin{theorem}[Theorem 3.5 \cite{mallick2024multiplicity}]\label{Th3.5}
Let $ \psi=( \psi_1, \psi_2,\dots, \psi_n)$ and $\phi=(\phi_1,\phi_2,\dots,\phi_n)$   be positive sub- and supersolutions of \eqref{problem3} satisfying \( \psi \leq \phi \) (i.e. $\psi_i\leq\phi_i$ for $i=1,2,\dots,n$). Then, there exists a minimal and a maximal solution in the order interval \( [\psi, \phi] \).
\end{theorem}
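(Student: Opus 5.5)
The plan is a monotone iteration scheme built on the comparison principle for each Pucci operator together with the quasimonotone structure of the $f_i$. Here we assume, as is implicit in the setting of \eqref{problem3}, that each $f_i$ is nondecreasing in the variables $u_j$ with $j\neq i$. Since $f_i\in C^{1}$ and the order interval $[\psi,\phi]$ is bounded in $L^\infty$, we fix $K>0$ so large that each map $t\mapsto \mu f_i(\dots,t,\dots)+Kt$ (in its $i$-th slot) is nondecreasing on $[0,\max_i\|\phi_i\|_\infty]$. Then each map $w\mapsto \mu f_i(w)+Kw_i$ is monotone with respect to the componentwise order on $[\psi,\phi]$. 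For $v\in[\psi,\phi]$ we define $T v=w=(w_1,\dots,w_n)$ as the unique solution of the decoupled problems
\begin{equation*}
-\mathcal{M}_{\lambda_i,\Lambda_i}^+(D^2w_i)+Kw_i=\mu f_i(v)+Kv_i \ \text{in }\Omega,\qquad w_i=0 \ \text{on }\partial\Omega .
\end{equation*}
Existence and uniqueness come from the Dirichlet theory for uniformly elliptic, proper, convex operators (as in Theorem \ref{Pr2.6}), together with $C^{1,\alpha}$/$C^{2,\alpha}$ estimates that make $T$ compact on $C(\bar\Omega)^n$.

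The key steps, in order, are as follows.
\begin{enumerate}[(i)]
\item \emph{$T$ is order preserving.} If $v\le \tilde v$, then the right-hand sides are ordered, and the comparison principle for $-\mathcal{M}^+_{\lambda_i,\Lambda_i}+K$ gives $Tv\le T\tilde v$.
\item \emph{$T$ maps $[\psi,\phi]$ into itself.} From the definition of viscosity subsolution (Definition \ref{viscosity}), $\psi_i$ satisfies $-\mathcal{M}^+(D^2\psi_i)+K\psi_i\le \mu f_i(\psi)+K\psi_i\le \mu f_i(v)+Kv_i$ in the viscosity sense. Comparison then gives $\psi\le Tv$. The bound $Tv\le\phi$ follows symmetrically from the supersolution property.
\item \emph{The two monotone sequences.} Set $\underline u^0=\psi$, $\underline u^{k+1}=T\underline u^k$, and $\overline u^0=\phi$, $\overline u^{k+1}=T\overline u^k$. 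By (i) and (ii), $\underline u^k$ is nondecreasing, $\overline u^k$ is nonincreasing, and $\underline u^k\le\overline u^k$ for all $k$.
\item \emph{Passing to the limit.} By compactness of $T$ and Dini/monotonicity, both sequences converge uniformly, to $\underline u$ and $\overline u$. Stability of viscosity solutions under uniform limits shows that these limits are fixed points of $T$, hence solutions of \eqref{problem3}.
\item \emph{Minimality and maximality.} Let $u$ be any solution in $[\psi,\phi]$. Then $u=Tu$, and induction with (i) gives $\underline u^k\le u\le\overline u^k$. Letting $k\to\infty$ yields $\underline u\le u\le\overline u$, so $\underline u$ is the minimal and $\overline u$ the maximal solution.
\end{enumerate}

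The main obstacle is step (ii): applying comparison between a viscosity subsolution and a classical solution of an equation whose right-hand side depends on $x$. Here we would invoke the comparison principle for proper uniformly elliptic equations with continuous data (Ishii / Caffarelli--Cabré). This is valid because $-\mathcal{M}^+$ is uniformly elliptic and $K>0$ makes the operator strictly proper.

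A second delicate point is the cooperativity hypothesis. Without monotonicity of $f_i$ in the off-diagonal variables the ordering in step (i) fails. If this hypothesis is not assumed, we would fall back on a Schauder fixed-point argument on the truncated problem obtained by replacing $f_i(v)$ with $f_i(\max(\psi,\min(v,\phi)))$. That argument gives existence in $[\psi,\phi]$, but extremal solutions would then require a Zorn-type argument instead.
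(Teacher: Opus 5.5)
The paper gives no proof of this statement. It is quoted from \cite{mallick2024multiplicity}, so there is no in-paper argument to compare against. Your monotone-iteration argument is the standard one for cooperative systems, and it is correct once cooperativity is available. In this paper it is available: every application (Theorems~\ref{Th3.1}--\ref{Th3.3}) assumes (C1), which makes each $f_i$ nondecreasing in \emph{all} variables.

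Three small points about the main argument:
\begin{enumerate}[(a)]
\item Under (C1) the shift $K$ is not needed for order preservation. It is still convenient, because it makes the operator strictly proper. Also, comparing with $\tilde w\pm K^{-1}\|g-\tilde g\|_\infty$ gives $\|Tv-T\tilde v\|_\infty\le K^{-1}\max_i\|g_i-\tilde g_i\|_\infty$ directly, with $g_i=\mu f_i(v)+Kv_i$ and $\tilde g_i=\mu f_i(\tilde v)+K\tilde v_i$. That is a simpler route to continuity of $T$ than the stability argument in step (iv).
\item Your justification of $K$ via ``$f_i\in C^1$'' clashes with (C2), since $\partial f_i/\partial x_i\to\infty$ as $x_i\to0$. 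This is harmless: only a lower bound on $\partial f_i/\partial x_i$ matters, and (C1) gives it.
\item Step (ii) uses $\psi\le 0\le \phi$ on $\partial\Omega$. Definition~\ref{viscosity} says nothing about boundary values, so you should state explicitly that this is part of being a sub/supersolution of \eqref{problem3}. For positive $\psi$ this forces $\psi=0$ on $\partial\Omega$.
\end{enumerate}

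Your fallback remark is wrong as stated. Let $\pi(u)=\max(\psi,\min(u,\phi))$ componentwise. Under Definition~\ref{viscosity}, $f_i$ is evaluated at $\psi$ itself, and without monotonicity in the off-diagonal variables the truncation-plus-Schauder argument cannot place the fixed point in $[\psi,\phi]$. On $\{u_i<\psi_i\}$ one has $\pi(u)_i=\psi_i$, but the other components $\pi(u)_j$ are arbitrary points of $[\psi_j,\phi_j]$. So the needed inequality $f_i(\pi(u))\ge f_i(\psi)$ is not available, and the comparison step fails.

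For non-cooperative systems you need the coupled notion of sub/supersolution, in which $-\mathcal{M}^+_{\lambda_i,\Lambda_i}(D^2\psi_i)\le \mu f_i(v)$ holds for every $v\in[\psi,\phi]$ with $v_i=\psi_i$, and similarly for $\phi$. Even then, Zorn's lemma only yields solutions that are minimal in the poset sense, not a least solution, because the solution set need not be directed. Since (C1) is in force here, drop the fallback.
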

\begin{theorem}[Theorem 3.7 \cite{mallick2024multiplicity}, Three-Solution Theorem]\label{Th2.5}
Let $\psi^i=(\psi^i_1,\psi^i_2,\dots,\psi^i_n)$ and  $\phi^i=(\phi^i_1,\phi^i_2,\dots,\phi^i_n)$ for $i=1,2.$
Suppose there exist two ordered pairs of sub- and supersolutions \( (\psi^1,\phi^1) \) and \( (\psi^2,\phi^2) \) for \eqref{problem3}, satisfying:
\( \psi^1 \leq \psi^2 \leq \phi^1 \),
     \( \psi^1 \leq \phi^2 \leq \phi^1 \),
     \( \psi^2 \not\leq \phi^2 \), and
\( \psi^2, \phi^2 \) are not solutions of \eqref{problem3}.
Then, \eqref{problem3} admits at least three distinct solutions \( u^1, u^2, u^3 \), where:
\begin{equation*}
 u^1 \in [\psi^1, \phi^2], \quad u^2 \in [\psi^2, \phi^1], \quad \text{and} \quad u^3 \in [\psi^1, \phi^1] \setminus ([\psi^1, \phi^2] \cup [\psi^2, \phi^1]).
\end{equation*}
where $u^i=(u^i_1,u^i_2,\dots,u^i_n)$ for $i=1,2,3.$
\end{theorem}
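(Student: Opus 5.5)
The plan is the classical Amann-type three-solution argument, transplanted to the viscosity setting for Pucci operators. We combine the ordered-interval result Theorem \ref{Th3.5} with a degree computation for a compact, order-preserving fixed-point map.

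\textbf{Setting up the fixed-point map.} Since $f_j$ is $C^{1}$ on the bounded order interval $[\psi^1,\phi^1]$, we can choose $M>0$ so large that $s\mapsto \mu f_j(s_1,\dots,s_j+s,\dots)+Ms$ is increasing in $s$ and nondecreasing in the other variables. Quasi-monotonicity of the system is implicitly assumed, as it must be for Theorem \ref{Th3.5}. We then define $T(v)=u$, where $u_j$ solves
\[
-\mathcal{M}^+_{\lambda_j,\Lambda_j}(D^2u_j)+Mu_j=\mu f_j(\tilde v)+M\tilde v_j \text{ in } \Omega, \qquad u_j=0 \text{ on } \partial\Omega .
\]
Here $\tilde v$ denotes the truncation of $v$ onto $[\psi^1,\phi^1]$. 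Unique solvability follows from the existence and uniqueness theory for uniformly elliptic proper operators, as in Theorem \ref{Pr2.6}. Compactness in $C(\bar\Omega)^n$ follows from Krylov--Safonov / $C^{1,\alpha}$ estimates. Fixed points of $T$ in $[\psi^1,\phi^1]$ are exactly the solutions.

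By the comparison principle for $-\mathcal{M}^++M$, $T$ is order preserving. Because $\psi^1$ and $\phi^1$ are sub- and supersolutions, $T$ maps $C(\bar\Omega)^n$ into the order interval $[\psi^1,\phi^1]$. A large ball $B_R$ therefore gives $\deg(I-T,B_R,0)=1$, by homotopy to a constant map.

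\textbf{Degree on the sub-intervals.} Since $\psi^2,\phi^2$ are not solutions, the strong maximum principle for $-\mathcal{M}^+ + M$ (Hopf lemma for Pucci operators) should give the following. Any fixed point $u\ge\psi^2$ satisfies $u\gg\psi^2$, i.e. $u_j>\psi^2_j$ in $\Omega$ with strictly larger outward normal derivative on $\partial\Omega$. The same holds for $u\le\phi^2$.

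Let $\mathcal{O}_1$ and $\mathcal{O}_2$ be the open sets $\{v: v\ll\phi^2\}$ and $\{v:v\gg\psi^2\}$ intersected with $B_R$, taken in the $C^1_0$ topology. We work in $C^1_0(\bar\Omega)^n$, where $T$ is still compact by $C^{1,\alpha}$ regularity. The operator $T$ maps the closed order intervals $[\psi^1,\phi^2]$ and $[\psi^2,\phi^1]$ into themselves; these intervals are convex. Hence $\deg(I-T,\mathcal O_1,0)=\deg(I-T,\mathcal O_2,0)=1$, with no fixed points on the boundaries by the strict inequalities above.

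Since $\psi^2\not\le\phi^2$, the sets $\mathcal O_1$ and $\mathcal O_2$ are disjoint. Excision then gives
\[
\deg\bigl(I-T,B_R\setminus\overline{\mathcal O_1\cup\mathcal O_2},0\bigr)=1-1-1=-1 .
\]
This produces a third fixed point $u^3$, which lies outside both sub-intervals. The first two fixed points $u^1,u^2$ come directly from Theorem \ref{Th3.5}.

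\textbf{Main obstacle.} The delicate step is the strict ordering $u\gg\psi^2$: it needs a Hopf boundary lemma and the strong maximum principle for viscosity solutions of the nonlinear operator. Two points must be checked.

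First, the difference $u_j-\psi^2_j$ is a viscosity supersolution of an extremal inequality $-\mathcal{M}^-_{\lambda_j,\Lambda_j}(D^2w)+Mw\ge 0$. This holds by the subadditivity of Pucci operators and the monotonicity of the modified right-hand side.

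Second, the case where $\psi^2$ fails to be a solution only in one component must be handled. I would first try to use the strict sub/supersolution hypothesis of Definition \ref{viscosity} (with $h_j\ne0$) to rule out $u_j\equiv\psi^2_j$. I would then apply the Hopf lemma for $\mathcal M^-$ to get the normal-derivative gap on $\partial\Omega$.
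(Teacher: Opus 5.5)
The paper gives no proof of this statement; it is quoted from \cite{mallick2024multiplicity}. So the natural benchmark is the Amann--Shivaji degree argument, and your architecture is that argument: a truncated, compact, order-preserving map $T$; degree $1$ on a large ball and on $\mathcal O_1=\{v\ll\phi^2\}$ and $\mathcal O_2=\{v\gg\psi^2\}$; disjointness from $\psi^2\not\le\phi^2$; and excision giving $-1$.

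\textbf{The gap.} It is the step you flag yourself: every fixed point $u\ge\psi^2$ satisfies $u\gg\psi^2$, and every fixed point $u\le\phi^2$ satisfies $u\ll\phi^2$. This is the only thing that keeps fixed points off $\partial\mathcal O_1\cup\partial\mathcal O_2$. For a system, ``$\psi^2$ is not a solution'' only says that \emph{some} component fails its equation. In every other component $j$, the strong maximum principle applied to $u_j-\psi^2_j$ still allows $u_j\equiv\psi^2_j$. Your remedy, strictness with $h_j<0$ for \emph{every} $j$ as in Definition~\ref{viscosity}, is not a hypothesis of the statement.

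\textbf{No argument can close this in the stated generality.} Consider a decoupled system $f_1=g(u_1)$, $f_2=h(u_2)$, using $\mathcal M^+_{\lambda,\lambda}=\lambda\Delta$.
\begin{enumerate}
\item[(a)] Let the second scalar problem have a unique solution $w$.
\item[(b)] Let the first have exactly two solutions $z_1<z_2$ in the relevant interval, with $z_2$ nondegenerate and stable. This happens, for example, at the parameter of the upper fold of an exactly S-shaped branch: $z_1$ is the degenerate fold solution and $z_2$ lies on the upper branch.
\item[(c)] Set $\psi^2=(z_2-\varepsilon\chi,\,w)$, where $\chi>0$ solves the linearization at $z_2$ with right-hand side $1$ and $\varepsilon$ is small.
\item[(d)] Set $\phi^2=(z_1,\bar\phi_2)$, where $\bar\phi_2\ge w$ is a strict supersolution.
\end{enumerate}
All hypotheses hold: the pairs are ordered inside $[\psi^1,\phi^1]$, neither $\psi^2$ nor $\phi^2$ is a solution, and $\psi^2\not\le\phi^2$. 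Yet the only solutions are $(z_1,w)$ and $(z_2,w)$.

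So your proof establishes the theorem only under a strengthened hypothesis. One option is that $\psi^2,\phi^2$ are strict in every component. The other is that the coupling is irreducible and strictly monotone, so a strict inequality in one component propagates to all. The first form is what Theorem~\ref{Th3.3} actually supplies. You should state it explicitly as a hypothesis rather than ``try'' it.

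\textbf{Two smaller repairs.}
\begin{enumerate}
\item[(1)] $u\gg\psi^2$ means $\partial_\nu(u_j-\psi^2_j)<0$ on $\partial\Omega$. That is a strictly \emph{smaller} outward normal derivative, not a larger one.
\item[(2)] $\{v\gg\psi^2\}$ is open in $C^1_0$ only if $\psi^2,\phi^2\in C^1(\bar\Omega)$. Viscosity or zero-extended sub/supersolutions need not be $C^1$. Work with $T\psi^2$ and $T\phi^2$ instead. They are $C^{1,\alpha}$, and $[T\psi^2,\phi^1]$ and $[\psi^1,T\phi^2]$ are still $T$-invariant. Also $T\psi^2\not\le T\phi^2$, because $\psi^2\le T\psi^2$ and $T\phi^2\le\phi^2$.
\end{enumerate}
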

\section{Main Results}
This section establishes the existence of positive solutions for \( \mu > 0 \) and multiple (three) positive solutions for a certain range of \( \mu \) for \eqref{problem3}. These results are obtained using the three-solution theorem (Theorem \ref{Th2.5}) from the previous section.

To prove our first result, we assume the following hypotheses:
\begin{itemize}
    \item[(C1)] \( f_{i} \) is non-decreasing in each variable, with \( f_{i}(0,0,\dots,0) = 0 \) for \( i = 1,2,\dots,n \).
    \item[(C2)] \( \displaystyle{\lim_{x_i \to 0} \frac{\partial f_{i}(x_1,x_2,\dots,x_i,\dots,x_n)}{\partial x_i} = \infty} \) for \( i=1,2,\dots,n \).
    \item[(C3)] \( \displaystyle{\lim_{s \to \infty} \frac{f_{i}(s,s,\dots,s)}{s} = 0} \) for \( i=1,2,\dots,n \).
\end{itemize}

\begin{theorem}\label{Th3.1}
Let conditions (C1) and (C2) be satisfied. Then, there exists  \( \mu_{0} > 0 \) such that for every \( 0 < \mu < \mu_0 \), system \eqref{problem3} admits a positive classical solution \( u = (u_1, u_2, \dots, u_n) \). Furthermore, \( \displaystyle{\sum_{i=1}^n \|u_i\|_{L^{\infty}(\Omega)} \to 0} \) as \( \mu \to 0 \).
\end{theorem}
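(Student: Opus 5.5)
We will apply Theorem \ref{Th3.5}, so the task is to build an ordered pair of positive sub- and supersolutions whose sup norms both tend to $0$ as $\mu\to 0$.

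\emph{Supersolution.} Take $\phi=(\phi_1,\dots,\phi_n)$ with $\phi_i = M_\mu e_i$, where $e_i$ is from Theorem \ref{Pr2.6}. The constant $M_\mu>0$ is to be chosen depending on $\mu$. Put $m=\max_i\|e_i\|_\infty$. Because $\mathcal{M}^+$ is positively homogeneous, $-\mathcal{M}^+_{\lambda_i,\Lambda_i}(D^2\phi_i)=M_\mu$. By monotonicity (C1),
\[
\mu f_i(\phi_1,\dots,\phi_n)\le \mu f_i(M_\mu m,\dots,M_\mu m).
\]
So $\phi$ is a supersolution as soon as
\[
\mu f_i(M_\mu m,\dots,M_\mu m)\le M_\mu \quad\text{for all } i.
\]
We do not have (C3) here, so we will instead fix $M_\mu$ tending to $0$ slowly, for instance $M_\mu=\sqrt{\mu}$. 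The requirement then becomes $\sqrt{\mu}\, f_i(\sqrt\mu\, m,\dots)\le 1$. This holds for $\mu<\mu_0$ because $f_i$ is continuous with $f_i(0)=0$; in fact the left side tends to $0$. This step is where $\mu_0$ comes from, and it gives $\|\phi_i\|_\infty\le \sqrt{\mu}\,m\to0$.

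\emph{Subsolution.} Take $\psi_i=\varepsilon\,\phi^+_{1,i}$, with $\phi^+_{1,i}$ normalized so that $\|\phi^+_{1,i}\|_\infty=1$ (Theorem \ref{Pr2.5}), and $\varepsilon=\varepsilon(\mu)$ small. We need
\[
\varepsilon\mu^+_{1,i}\phi^+_{1,i}\le \mu f_i(\psi_1,\dots,\psi_n).
\]
By (C1), $f_i(\psi)\ge f_i(0,\dots,0,\psi_i,0,\dots,0)=:g_i(\psi_i)$. Since $g_i(0)=0$ and $g_i'(s)\to\infty$ as $s\to0$ by (C2), the mean value theorem gives $g_i(s)/s\to\infty$ as $s\to 0^+$. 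Choose $\delta_\mu$ with $g_i(s)\ge (\mu^+_{1,i}/\mu)\,s$ for $0<s\le\delta_\mu$, and take $\varepsilon\le \delta_\mu$. Then
\[
\mu f_i(\psi)\ge \mu g_i(\varepsilon\phi^+_{1,i})\ge \mu^+_{1,i}\varepsilon\phi^+_{1,i}.
\]
Here the main obstacle is only the interpretation of (C2): we need $f_i(0,\dots,s,\dots,0)$ to be superlinear at $0$. If the limit in (C2) is read with the other variables fixed at $0$, this is immediate.

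\emph{Ordering.} We need $\psi\le\phi$, that is $\varepsilon\phi^+_{1,i}\le \sqrt\mu\, e_i$. Both functions vanish on $\partial\Omega$. By the Hopf lemma for Pucci operators, $e_i\ge c\,d(x)$ and $\phi^+_{1,i}\le C\,d(x)$, since $\phi^+_{1,i}\in C^{1}(\bar\Omega)$ by regularity. Hence $\phi^+_{1,i}\le K e_i$ for some $K$, and it suffices to shrink $\varepsilon$ further so that $\varepsilon K\le\sqrt\mu$. Theorem \ref{Th3.5} then yields a solution $u\in[\psi,\phi]$. It is positive since $u\ge\psi>0$ in $\Omega$. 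It is classical by $C^{2,\alpha}$ regularity for concave uniformly elliptic equations (Evans–Krylov), since the right side is Hölder. Finally,
\[
\sum_{i=1}^n\|u_i\|_\infty\le \sum_{i=1}^n\|\phi_i\|_\infty\le n\sqrt\mu\, m\to0 \quad\text{as }\mu\to0.
\]
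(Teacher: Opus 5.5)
Your proof is correct and follows the paper's route: $\phi_i$ a multiple of the torsion function $e_i$, $\psi_i$ a small multiple of the principal eigenfunction $\phi^+_{1,i}$ (using (C2) read along the $i$-th axis, exactly as the paper does), and then Theorem \ref{Th3.5}. The differences are minor. The paper takes $\phi_i=\mu e_i$, so the supersolution condition is just $f_i(\mu\|e_1\|_\infty,\dots,\mu\|e_n\|_\infty)<1$ and no slower scaling such as $\sqrt{\mu}$ is needed. You also explicitly check the ordering $\psi\le\phi$ via the Hopf lemma and the classical regularity, which the paper leaves implicit in this proof.
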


\begin{proof}
Let \( e_i \in C^2(\Omega) \cap C(\overline{\Omega}) \) be the solution of  \eqref{ee} from Theorem \ref{Pr2.6}. Since \( f_i(0,0,\dots,0) = 0 \), we choose sufficiently small \( \mu^i_{1}, \mu^i_{2}, \dots, \mu^i_{n} > 0 \) such that for \( i=1,2,\dots,n \),
\begin{equation*}
f_i(\mu^i_{1}\|e_{1}\|_{\infty},\mu^i_{2}\|e_{2}\|_{\infty},\dots,\mu^i_{n}\|e_{n}\|_{\infty}) < 1.
\end{equation*}
Setting \( \mu_{0} = \displaystyle{\min_{i=1,2,\dots,n}}\{ \mu^i_{1}, \mu^i_{2}, \dots, \mu^i_{n} \} \), it follows that
\begin{equation*}
f_i(\mu_{0}\|e_{1}\|_{\infty},\mu_{0}\|e_{2}\|_{\infty},\dots,\mu_{0}\|e_{n}\|_{\infty}) < 1.
\end{equation*}
For a fixed \( \mu < \mu_{0} \), we define \( \phi_i = \mu e_i \). Then,
\[
-\mathcal{M}_{\lambda_i,\Lambda_i}^+ (D^2\phi_i) = \mu.1
\]
Using the monotonicity of \( f_i \), we get
\begin{equation*}
-\mathcal{M}_{\lambda_i,\Lambda_i}^+ (D^2\phi_i) > \mu f_i(\phi_1, \phi_2, \dots, \phi_n),
\end{equation*}
proving that \( \phi \) is a supersolution of \eqref{problem3}. 

A subsolution \( \psi \) is constructed using the principal eigenfunction \( \phi^+_{1,i} \) from Theorem \ref{Pr2.5}. Setting \( \psi_i = m_{\mu} \phi^+_{1,i} \),\\ 
since $\frac{\partial f_i(x_1,x_2,\dots,x_i,\dots,x_n)}{\partial x_i}\longrightarrow \infty$ as ${x_i \to 0}$, there exists $m_i=m_i(\mu)>0$ such that 
\begin{equation*}
{f_i(0,0,\dots,s,\dots,0)>\frac{\mu_{1,i}^+}{\mu}s }~~ \text{for every}~s \in (0,m_i),~~ \text{for}~~ i=1,2,\dots,n.
\end{equation*}
choose $m_{\mu}=$min$\{m_1,m_2,\dots,m_n\},$ we get\\
\begin{align*}
{f_i(0,0,\dots,s,\dots,0)>\frac{\mu_{1,i}^+}{\mu}s }~~ \text{for every}~s \in (0,m_{\mu}).\\
\end{align*}
we obtain
\[
-\mathcal{M}_{\lambda_i,\Lambda_i}^+ (D^2\psi_i) \leq \mu f_i(\psi_1, \psi_2, \dots, \psi_n).
\]
By Theorem \ref{Th3.5}, a solution \( (u_1, u_2, \dots, u_n) \) exists with \( \psi \leq u \leq \phi \), and \( \sum \|u_i\|_{\infty} \to 0 \) as \( \mu \to 0 \).
\end{proof}

The next theorem extends this result for all \( \mu > 0 \), assuming that \( f_i \) satisfies (C3), which ensures that the growth of \( f_i \) at infinity remains controlled and does not exceed a linear rate.

\begin{theorem}\label{Th3.2}
Let conditions $(C1)-(C3)$ be  satisfied. Then, the system \eqref{problem3} admits a positive solution $u$ for every $\mu>0.$
\end{theorem}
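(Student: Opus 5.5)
We construct an ordered pair of positive sub- and supersolutions for each fixed $\mu>0$ and then apply Theorem \ref{Th3.5}. The subsolution is the one from the proof of Theorem \ref{Th3.1}, which did not use the smallness of $\mu$. For fixed $\mu>0$, hypothesis (C2) gives $m_\mu>0$ with $f_i(0,\dots,s,\dots,0)>\frac{\mu^+_{1,i}}{\mu}s$ for $s\in(0,m_\mu)$. We normalise $\|\phi^+_{1,i}\|_\infty=1$ and set $\psi_i=m\,\phi^+_{1,i}$ with $0<m<m_\mu$. By the monotonicity in (C1),
\[
\mu f_i(\psi_1,\dots,\psi_n)\geq \mu f_i(0,\dots,\psi_i,\dots,0)\geq \mu^+_{1,i}\psi_i=-\mathcal{M}^+_{\lambda_i,\Lambda_i}(D^2\psi_i).
\]
The first inequality uses (C1), and the second holds where $\psi_i>0$. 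The inequality is trivial where $\psi_i=0$, which happens only on $\partial\Omega$. So $\psi$ is a positive subsolution.

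For the supersolution we use (C3) with large multiples of the torsion functions. We take $\phi_i=M e_i$ with $M$ large, where $e_i$ is from Theorem \ref{Pr2.6}, and put $K=\max_i\|e_i\|_\infty$. Then $-\mathcal{M}^+_{\lambda_i,\Lambda_i}(D^2\phi_i)=M$. Since each $\phi_j\le MK$, (C1) gives $f_i(\phi_1,\dots,\phi_n)\le f_i(MK,\dots,MK)$. It therefore suffices that
\[
\frac{f_i(MK,\dots,MK)}{MK}<\frac{1}{\mu K}\qquad\text{for all } i.
\]
By (C3) this holds for all $M$ sufficiently large, and then $M>\mu f_i(\phi_1,\dots,\phi_n)$, so $\phi$ is a (strict) supersolution.

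It remains to check the ordering $\psi\le\phi$, which I expect to be the main technical point. Two functions vanishing on $\partial\Omega$ must be compared near the boundary, so a sup-norm bound on $\psi$ alone is not enough. I would first try the comparison principle for $\mathcal{M}^+$: since $-\mathcal{M}^+(D^2\psi_i)=\mu^+_{1,i}m\phi^+_{1,i}\le \mu^+_{1,i}m\le M=-\mathcal{M}^+(D^2\phi_i)$, it is enough to take $M\ge m\max_i\mu^+_{1,i}$. Both functions vanish on $\partial\Omega$, so the comparison principle for the Pucci operator with ordered right-hand sides gives $\psi_i\le\phi_i$. Alternatively, one could invoke the Hopf lemma for both functions and enlarge $M$.

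Theorem \ref{Th3.5} then yields a solution $u$ with $\psi\le u\le\phi$, and it is positive since $\psi_i>0$ in $\Omega$. Classical regularity follows as in Theorem \ref{Th3.1}.
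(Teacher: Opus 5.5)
Your proof is correct and follows the paper's construction: the same subsolution $m\phi^+_{1,i}$ carried over from Theorem \ref{Th3.1}, the same supersolution $Me_i$ built from (C1) and (C3), and the same appeal to Theorem \ref{Th3.5}. Your bound $f_i(\phi)\le f_i(MK,\dots,MK)$ with $K=\max_i\|e_i\|_\infty$ also makes explicit a monotonicity step that the paper leaves implicit when it applies (C3) at the non-diagonal point $(\tilde m_\mu\|e_1\|_\infty,\dots,\tilde m_\mu\|e_n\|_\infty)$.

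The only genuine difference is how you get the ordering $\psi\le\phi$. The paper appeals to $\partial e_i/\partial n<0$ (Hopf) and then shrinks $m_\mu$. That argument tacitly also needs a boundary gradient bound on $\phi^+_{1,i}$. Your comparison argument avoids this. Subadditivity of $\mathcal{M}^+$ gives $\mathcal{M}^+(D^2(\psi_i-\phi_i))\ge M-m\mu^+_{1,i}\phi^+_{1,i}\ge 0$, and the maximum principle then finishes the proof. This needs no Hopf lemma and no boundary regularity of the eigenfunction, so it is the cleaner of the two routes.
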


\begin{proof}
Let $(\psi_1,\psi_2,\dots,\psi_n)$ be the subsolution obtained in Theorem \ref{Th3.1}. Now, we proceed to construct a supersolution $\phi=(\phi_1,\phi_2,\dots,\phi_n)$ of \eqref{problem3}. 
Utilizing conditions $(C1)$ and $(C3)$, we can choose $\tilde{m}_{\mu} \gg 1$ for $\mu>0$ such that for $i=1,2,\dots,n$, it holds that
\begin{equation*}
\dfrac{1}{\mu\|e_i\|_{\infty}}\geq \dfrac{f_{i}(\tilde{m}_{\mu}\|e_1\|_{\infty},\tilde{m}_{\mu}\|e_2\|_{\infty},\dots,\tilde{m}_{\mu}\|e_n\|_{\infty})}{\tilde{m}_{\mu}\|e_i\|_{\infty}}
\end{equation*}
For $i=1,2,\dots,n$, we set $\phi_i=\tilde{m}_{\mu} e_i$. Then,
\begin{align*}
-\mathcal{M}_{\lambda_i,\Lambda_i}^+ (D^2{\phi}_i)=-\mathcal{M}_{\lambda_i,\Lambda_i}^+ (D^2\tilde{m}_{\mu} e_i)
 =&\tilde{m}_{\mu}\\
\geq&\mu\ f_i(\tilde{m}_{\mu}\|e_{1}\|_{\infty},\tilde{m}_{\mu}\|e_{2}\|_{\infty},\dots,\tilde{m}_{\mu}\|e_{n}\|_{\infty})\\
\geq&\mu \ f_i(\tilde{m}_{\mu} e_{1},\tilde{m}_{\mu} e_{2},\dots,\tilde{m}_{\mu} e_{n})\\
=&\mu \ f_i(\phi_1,\phi_2,\dots,\phi_n)
\end{align*}
Thus, ${\phi}=({\phi}_1,{\phi}_2,\dots,{\phi}_n)$ is a supersolution of \eqref{problem3} for all $\mu>0.$ Furthermore, since $\dfrac{\partial e_i}{\partial n}<0$ and $\tilde{m}_\mu\gg1,$ we can select $m_{\mu}$ sufficiently small to ensure that $\psi_i\leq \phi_i$ for all $i=1,2,\dots,n$. Therefore, for each $\mu>0,$ Problem \ref{problem3} has a solution $u=(u_1,u_2,\dots,u_n)$ satisfying $\psi\leq u\leq{\phi}.$
\end{proof}

\noindent Our next result(Theorem \eqref{Th3.3}) deals with the existence of multiple(three) solutions to our problem \eqref{problem3} for certain range of values of $\mu.$ In order to prove this we require the following additional condition of $f_i$'s.
\begin{itemize}
\item[(C4)] Suppose there exist two positive constants $0<a<b$ with $f_i(a,a,\dots,a)\neq 0$
and $f_i(b,b,\dots,b)\neq 0$ for $i=1,2,\dots n$ and $\displaystyle{\min_{i=1,2,\dots,n}}\left\{\dfrac{1}{\|e_i\|_{\infty}}\dfrac{a}{f_i(a,a,\dots,a)}\right\}>\displaystyle{\max_{i=1,2\dots,n}}\left\{A_i
\dfrac{b}{f_i(b,b,\dots,b)}\right\},$
\end{itemize}
where $A_i=\displaystyle{\inf_{\epsilon}}\dfrac{N_i^{-}R^{N_i^+-1}}{\epsilon^{N_i^{-}}(R-\epsilon)}$, $N_i^-=\frac{\Lambda_i}{\lambda^i}(N-1)+1$, $N_i^+=\frac{\lambda_i}{\Lambda_i}(N-1)+1$ and $R$ is the radius of the largest inscribed ball $B_R$ in $\Omega.$
\begin{theorem}\label{Th3.3}
Let $f_i$'s satisfy $(C1)$-$(C4).$ Then there exist  $0\leq \mu_*<\mu^*$ such that for any $\mu\in[\mu_{*},\mu^{*}],$ system \eqref{problem3} has at least three solutions.
\end{theorem}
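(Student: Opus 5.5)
We will apply the Three-Solution Theorem (Theorem \ref{Th2.5}), so the task is to construct two ordered pairs of sub- and supersolutions. We take $\mu_*:=\max_i A_i\, b/f_i(b,\dots,b)$ and $\mu^*:=\min_i a/(\|e_i\|_\infty f_i(a,\dots,a))$; hypothesis (C4) says exactly that $\mu_*<\mu^*$. Fix $\mu\in[\mu_*,\mu^*]$.

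For the outer pair $(\psi^1,\phi^1)$ we use Theorem \ref{Th3.2}. We let $\psi^1_i=m_\mu\phi^+_{1,i}$ be the small eigenfunction subsolution from Theorem \ref{Th3.1}, and $\phi^1_i=\tilde m_\mu e_i$ the large supersolution obtained from (C3). Taking $m_\mu$ small and $\tilde m_\mu$ large, we also arrange that $\psi^1$ lies below every other function below and $\phi^1$ above every other function below.

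For the inner supersolution we set $\phi^2_i=\frac{a}{\|e_i\|_\infty}e_i$. Then $\phi^2_i\le a$, so monotonicity (C1) and $\mu\le\mu^*$ give
\[
-\mathcal{M}^+_{\lambda_i,\Lambda_i}(D^2\phi^2_i)=\frac{a}{\|e_i\|_\infty}\ge \mu f_i(a,\dots,a)\ge \mu f_i(\phi^2_1,\dots,\phi^2_n).
\]
To ensure $\phi^2$ is not a solution, we take $\mu<\mu^*$ strictly, or argue that the inequality is strict somewhere since $\phi^2_i<a$ near $\partial\Omega$.

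For the inner subsolution $\psi^2$ we use a radial construction on the inscribed ball $B_R$, which is where the constant $A_i$ comes from. On $B_R$ (centered at $x_0$) we define a radial function $w_i(r)$ with $w_i=1$ on $B_\epsilon$. On the annulus $\epsilon<r<R$ it decreases to $0$ at $r=R$, with profile built from the Pucci radial formulas: for radial decreasing $w$ one has $\mathcal{M}^+(D^2w)=\Lambda w''+\lambda\frac{N-1}{r}w'$ or $\lambda w''+\Lambda\frac{N-1}{r}w'$, according to the sign of $w''$. The profile is chosen so that $-\mathcal{M}^+_{\lambda_i,\Lambda_i}(D^2w_i)\le \frac{N_i^-R^{N_i^+-1}}{\epsilon^{N_i^-}(R-\epsilon)}$ on the annulus; a suitable choice is, for instance, the piecewise power profile $r^{2-N_i^\pm}$ or a linear-in-$r^{N^\pm}$ interpolation. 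Outside $B_R$ we set $w_i=0$. We then put $\psi^2_i=b\,w_i$. On $B_\epsilon$ we have $-\mathcal{M}^+(D^2\psi^2_i)=0$ while $\mu f_i(\psi^2)=\mu f_i(b,\dots,b)\ge0$. On the annulus, $f_i\ge0$, and we need $bA_i\le \mu f_i(b,\dots,b)$ near the region where positivity matters; this follows from $\mu\ge\mu_*$ after optimizing over $\epsilon$. The subsolution property across $r=R$ holds because the maximum of two subsolutions is a viscosity subsolution and $w_i'\le0$ there.

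Since $\max\psi^2_i=b>a\ge\max\phi^2_i$, we get $\psi^2\not\le\phi^2$. Moreover $\psi^2$ is not a solution, since on the annulus the inequality is strict for a suitable profile (or $\psi^2$ fails to be $C^2$ at $r=R$). Theorem \ref{Th2.5} then yields three solutions.

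The main obstacle is the subsolution estimate on the annulus. First, the annulus estimate only checks $-\mathcal{M}^+(D^2\psi^2_i)\le \mu f_i(\psi^2)$ when $f_i(\psi^2)$ is large, yet there $\psi^2_i<b$. The fix I would try is to make the transition region have $\psi^2_i$ near $b$ where $-\mathcal{M}^+$ is positive, and $-\mathcal{M}^+\le0$ (subharmonic-type region) where $\psi^2_i$ is small. This needs the right choice of concave/convex pieces matching $N_i^\pm$. Second, if $A_i$ is an infimum that is not attained, we work with an $\epsilon$ giving a constant within a small margin and use strict inequality in (C4). If this is delicate, we may shrink to $\mu\in(\mu_*,\mu^*)$ with closed subintervals, consistent with the statement allowing $\mu_*\ge0$.
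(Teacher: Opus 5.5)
There is a genuine gap, and it is exactly the step you flag as ``the main obstacle'': the inner subsolution $\psi^2$. With an explicit profile $\psi^2_i=b\,w_i$ you need the pointwise inequality $-\mathcal{M}^+_{\lambda_i,\Lambda_i}(D^2\psi^2_i)(x)\le\mu f_i(\psi^2(x))$ on the whole annulus. As $|x-x_0|\to R$ the right-hand side tends to $\mu f_i(0,\dots,0)=0$, so a bound of the form $-\mathcal{M}^+(D^2\psi^2_i)\le bA_i\le\mu f_i(b,\dots,b)$ says nothing there. Your repair (a convex tail with $\mathcal{M}^+(D^2w_i)\ge0$, preceded by a concave piece on which $w_i\ge c$) may produce \emph{a} subsolution for large $\mu$. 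But its threshold involves $f_i$ at the intermediate level $bc$ and a geometric constant different from $A_i$, so it is not controlled by (C4), and nothing shows it lies below $\mu^*$. The estimate $-\mathcal{M}^+(D^2w_i)\le N_i^-R^{N_i^+-1}/(\epsilon^{N_i^-}(R-\epsilon))$ is itself only asserted. You would also need $w_i'(\epsilon^+)=0$, since a concave corner on $\{r=\epsilon\}$ destroys the viscosity subsolution property.

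The missing idea is to use the explicit profile only as a \emph{comparison function}. The paper fixes $d=b\rho$ with $\rho\equiv1$ on $[0,\epsilon]$, $\rho(R)=0$ and $|d'|\le blm/(R-\epsilon)$. It then takes $\tilde\psi_i$ to be the solution of $-\mathcal{M}^+_{\lambda_i,\Lambda_i}(D^2\tilde\psi_i)=\mu f_i(d,\dots,d)$ in $B_R$, $\tilde\psi_i=0$ on $\partial B_R$. This solution is radial. Integrating $(\tau_i\tilde\psi_i')'=-\mu\tilde\tau_i f_i(d,\dots,d)$ and keeping only the mass on $[0,\epsilon]$, where $d\equiv b$, gives $-\tilde\psi_i'(r)\ge\mu f_i(b,\dots,b)\epsilon^{N_i^-}/(\Lambda_iN_i^-R^{N_i^+-1})$ for $r>\epsilon$. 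This exceeds $|d'|$ once $\mu>\mu_*$, with $\epsilon$ optimized and $l,m>1$ close to $1$. (The estimate carries a factor $\Lambda_i$, so the threshold that actually comes out is $\max_i\Lambda_iA_ib/f_i(b,\dots,b)$.) Since $\tilde\psi_i(R)=d(R)=0$, this yields $\tilde\psi_i>d$ on $[0,R)$. Then (C1) gives $-\mathcal{M}^+(D^2\tilde\psi_i)=\mu f_i(d,\dots,d)\le\mu f_i(\tilde\psi_1,\dots,\tilde\psi_n)$ everywhere in $B_R$, including near $\partial B_R$. So only $f_i(b,\dots,b)$ enters and your obstacle never arises; moreover $\tilde\psi_i(0)>d(0)=b>a$ gives $\tilde\psi\not\le\tilde\phi$. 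The rest of your plan (outer pair, $\phi^2_i=ae_i/\|e_i\|_\infty$, Theorem~\ref{Th2.5}) agrees with the paper.

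Two smaller points. First, the comparison as carried out uses $l,m>1$ and hence $\mu>\mu_*$ strictly, so what is actually proved is $\mu\in(\mu_*,\mu^*)$; your closing hedge is the honest conclusion. Second, for $\Omega\neq B_R$ you cannot place $\psi^1=m_\mu\phi^+_{1,i}>0$ below a $\psi^2$ that vanishes on $\Omega\setminus B_R$. Replace $\psi^2$ by the componentwise maximum $\max(\psi^1,\psi^2)$, which is still a subsolution by (C1).
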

\begin{proof}
By Theorem \ref{Th2.5}, the existence of three solutions follows once we establish two pairs of subsolutions and supersolutions for system \eqref{problem3} that are appropriately ordered . Here, we consider $\Omega=B_{R}.$ Let $\psi$ and ${\phi}$ be as defined in Theorem \ref{Th3.2}.
Now, we proceed to construct a strict supersolution $\Phi$ to \eqref{problem3}. Let us set $\tilde{\phi}=(\tilde{\phi}_1,\tilde{\phi}_2,\dots,\tilde{\phi}_n)=\left(\dfrac{ae_1}{\|e_1\|_\infty},\dfrac{ae_2}{\|e_2\|_\infty},\dots,\dfrac{ae_n}{\|e_n\|_\infty}\right)$
where $e_i$'s are as above. Thus, for $\mu<\mu^{*}=\displaystyle{\min_{i=1,2,\dots,n}}\left\{\dfrac{1}{\|e_i\|_{\infty}}\dfrac{a}{f_i(a,a,\dots,a)}\right\}$,
and $i=1,2,\ldots,n,$ we have
\begin{align*}
-\mathcal{M}_{\lambda_i,\Lambda_i}^+ (D^2\tilde{\phi}_i)&=\frac{a}{\|e_i\|_\infty}\\
&>\mu^*  f_i(a,a,\dots,a)\\
&\geq\mu f_i(a,a,\dots,a)\\
&\geq\mu f_i\left(\dfrac{ae_1}{\|e_1\|_\infty},\dfrac{ae_2}{\|e_2\|_\infty},\dots,\dfrac{ae_n}{\|e_n\|_\infty}\right)\\
&=\mu f_i(\tilde{\phi}_1,\tilde{\phi}_2,\dots,\tilde{\phi}_n)
\end{align*}
Thus, $\tilde{\phi}$ is a strict supersolution of (\ref{problem3}) for $\mu<\mu^{*}.$ Now, we proceed to construct a positive strict subsolution $\tilde{\psi}$ of (\ref{problem3}) for $\mu>\mu_{*}=\displaystyle{\max_{i=1,2\dots,n}\left\{A_i
\dfrac{b}{f_i(b,b,\dots,b)}\right\}}$. For $0<\epsilon<R;~l,m>1,$ define $\rho:[0,R]\to[0,1]$ as
\[ \rho(r)=\left\{
\begin{array}{l l}
1 & \quad \text{$0 \leq r \leq \epsilon,$}\\
1-\left(1-\left(\dfrac{R-r}{R-\epsilon}\right)^m\right)^l &\quad \text{$\epsilon < r \leq R.$}
\end{array}\right.
\]
and set $d(r)=b\rho(r).$ Then
\[ \rho^{\prime}(r)=\left\{
\begin{array}{l l}
0 & \quad \text{$0 \leq r \leq \epsilon,$}\\
-\dfrac{lm}{R-\epsilon}\left(1-\left(\dfrac{R-r}{R-\epsilon}\right)^m\right)^{l-1}\left(\dfrac{R-r}{R-\epsilon}\right)^{m-1} &\quad \text{$\epsilon < r \leq R.$}
\end{array}\right.
\]
 we obtain
\begin{equation}\label{10}
|\rho^{\prime}(r)| \leq \dfrac{lm}{R-\epsilon}~~\text{and}~|d^{\prime}(r)| \leq \dfrac{b lm}{R-\epsilon}
\end{equation}
Let us examine the solution of the following system:
\begin{equation}\label{211}
\left\{
\begin{split}
-\mathcal{M}_{\lambda_1,\Lambda_1}^+ (D^2\tilde{\psi}_1)&=\mu f_1(d(r),d(r),\dots,d(r))~~&&\text{in}~~ B_{R},\\
-\mathcal{M}_{\lambda_2,\Lambda_2}^+ (D^2\tilde{\psi}_2)&=\mu f_2(d(r),d(r),\dots,d(r))~~&&\text{in}~~ B_{R},\\
&\vdots=\vdots\\
-\mathcal{M}_{\lambda_n,\Lambda_n}^+ (D^2\tilde{\psi}_n)&=\mu f_n(d(r),d(r),\dots,d(r))~~&&\text{in}~~ B_{R},\\
\tilde{\psi}_1=\tilde{\psi}_2=\dots=\tilde{\psi}_n&=0~~&&\text{on}~~\partial B_{R},
\end{split}
\right.
\end{equation}
and set $\tilde{\psi}=(\tilde{\psi}_1,\tilde{\psi}_2,\dots,\tilde{\psi}_n).$ 
By well known regularity results the function $\tilde{\psi_i}\in C^{2}({B_{R}})\cap C(\partial B_R),$ for $i=1,2,\dots, n$.  It follows from Theorem 1.1\cite{MR4097925} that $\tilde{\psi}_i$ is radially symmetric. As a result, we obtain:  
\begin{equation}\label{12}
\left\{
\begin{split}
-&\theta_1(\tilde{\psi}_1''(r))\tilde{\psi}_1''(r)-\frac{N-1}{r}\theta_1(\tilde{\psi}_1'(r))\tilde{\psi}_1'(r)= \mu f_1(d(r),d(r),\dots,d(r)) ~&&r\in (0,R);\\
-&\theta_2(\tilde{\psi}_2''(r))\tilde{\psi}_2''(r)-\frac{N-1}{r}\theta_2(\tilde{\psi}_2'(r))\tilde{\psi}_2'(r)= \mu f_2(d(r),d(r),\dots,d(r)) ~&&r\in (0,R);\\
&\vdots&&\vdots\\
-&\theta_n(\tilde{\psi}_n''(r))\tilde{\psi}_n''(r)-\frac{N-1}{r}\theta_n(\tilde{\psi}_n'(r))\tilde{\psi}_n'(r)= \mu f_n(d(r),d(r),\dots,d(r)) ~&&r\in (0,R);\\
&\tilde{\psi}_1'(0)=\tilde{\psi}_2'(0)=\dots=\tilde{\psi}_n'(0)=0;\\
&\tilde{\psi}_1(R)=\tilde{\psi}_2(R)=\dots=\tilde{\psi}_n(R)=0.
\end{split}
\right.
\end{equation}
where
\[
    \theta_i(s)=
\begin{cases}
    \Lambda_i,& \text{if}~s\geq0\\
    \lambda_i,& \text{if}~s<0.
\end{cases}
\]
For a fixed $1\leq i\leq n,$ we rewrite the $i$th equation in \eqref{12} as follows:
\begin{align}\label{3.1}
\tilde{\psi}_i''(r)+\frac{(N-1)\theta_{i}(\tilde{\psi}_i'(r))}{r\theta_i(\tilde{\psi}_i''(r))}\tilde{\psi}_i'(r)=-\frac{\mu}{\theta_i(\tilde{\psi}_i''(r))}f_i(d(r),d(r),\dots,d(r)).
\end{align}
Furthermore, by setting $v_i(r)=\frac{\theta_i(\tilde{\psi}_i'(r))(N-1)}{\theta_i(\tilde{\psi}_i''(r))r}$, $\tau_i(r)=\exp{{\int_1^rv_i(s)ds}}$ and $\tilde{\tau_i}(r)=\frac{\tau_i(r)}{\theta_i(\tilde{\psi}_i''(r))},$ Equation \ref{3.1} can further be rewritten as follows:
\begin{equation}\label{3.2}
\begin{aligned}{}
\tau_i(r)\tilde{\psi}_i''(r)+\tau_{i}(r)\frac{\theta_{i}(\tilde{\psi}_i'(r))}{\theta_{i}(\tilde{\psi}_i''(r))}\frac{N-1}{r}\tilde{\psi}_i'(r)&=-\mu\tilde{\tau_i}(r)f_i(d(r),d(r),\dots,d(r)),\\
\left(\tau_i(r)\tilde{\psi}_i'(r)\right)'&=-\mu\tilde{\tau_{i}}(r)f_{i}(d(r),d(r),\dots,d(r)),
\end{aligned}
\end{equation}
Integrating \eqref{3.2}  from $0$ to $r$ we have
\begin{align}
\int_0^r\left(\tau_i(s)\tilde{\psi}_i'(s)\right)'ds=-\mu\int_0^r\tilde{\tau_i}(s)f_i(d(s),d(s),\dots,d(s))ds
\end{align}
which implies
\begin{align*}
\tilde{\psi}_i'(r)=-\frac{\mu}{\tau_i(r)}\int_0^r\tilde{\tau_i}(r)f_i(d(s),d(s),\dots,d(s))ds~~~(\text{since}~~\Psi'_i(0)=0).
\end{align*}
For each fixed $1\leq i\leq n,$ we assert that for $\mu>\mu_{*},$ $\tilde{\psi}_i(t)>d(t)$ for all $t\in[0,R).$ If this assertion holds then

$$-\mathcal{M}_{\lambda_i,\Lambda_i}^+(D^2\tilde{\psi}_i)=\mu f_i(d(r),d(r),\dots,d(r))<\mu f_i(\tilde{\psi}_1(r),\tilde{\psi}_2(r),\dots,\tilde{\psi}_n(r))~~~\rm{in}~~B_R,$$
that is, $(\tilde{\psi}_1,\tilde{\psi}_2,\dots,\tilde{\psi}_n)$ is a strict supersolution of \eqref{problem3}.\\
{\it{Proof of claim:}}  To establish that $d(t)<\tilde{\psi}_i(t),$ it suffices to show $\tilde{\psi}_i'(t)<d'(t)$ on $(0,R],$ since $\tilde{\psi}_i(R)=d(R)=0.$
For any $r\in (0,\epsilon)$, $f_i(d(r),d(r),\dots,d(r))=f_i(b,b,\dots,b)$ and $\tilde{\tau_i}(r)>0,$ which implies $\tilde{\psi}_i'(r)<0=d'(r)$ confirming the claim . To extend this proof  to $r\in(\epsilon,R).$ We observe that
\begin{itemize}
\item[(i)]~$N_{i}^+-1\leq v_i(r)r\leq N_{i}^--1,$
\item[(ii)]~$r^{N_{i}^--1}\leq\tau_i(r)\leq r^{N_{i}^+-1}$~and~$\frac{\tau_i(r)}{\Lambda_i}\leq\tilde{\tau_i}(r)\leq\frac{\tau_i(r)}{\lambda_i}.$
\end{itemize}
Now consider
\begin{align}\nonumber
-\tilde{\psi}_i'(r)=&\frac{\mu}{\tau_i(r)}\int_0^r\tilde{\tau_i}(s)f_i(d(s),d(s),\dots,d(s))ds\geq\frac{\mu}{\tau_i(r)}\int_0^{\epsilon}\tilde{\tau}_i(s)f_i(d(s),d(s),\dots,d(s))ds~~~~(\text{since}~r>\epsilon)\\ \nonumber
\geq& \frac{\mu}{r^{N_{i}^+-1}}\int_0^\epsilon\tilde{\tau_i}(s)f_i(d(s),d(s),\dots,d(s))ds\geq\frac{\mu}{\Lambda_i r^{N_{i}^+-1}}\int_0^{\epsilon}s^{N_{i}^--1}f_i(b,b,\dots,b)ds\nonumber\\
\geq&\frac{\mu f_i(b,b,\dots,b)}{\Lambda_i R^{N_{i}^+-1}}\int_0^{\epsilon}s^{N_{i}^--1}ds=\frac{\mu f_i(b,b,\dots,b)\epsilon^{N_{+}^-}}{\beta R^{N_{i}^+-1}N_{i}^-}.\label{4.9}
\end{align}
Thus, in order to show $-\tilde{\psi}_i'(r)< -d'(r),$  in view of \eqref{4.9} and \eqref{10}, it is sufficient to show that
\begin{equation}\label{10.1}
\frac{\mu f_i(b,b,\dots,b)\epsilon^{N^{-}_i}}{\Lambda_i R^{N_{+}^i-1}N^{-}_i}>\frac{lmb}{R-\epsilon}.
\end{equation}
Let us take $\epsilon_i=\frac{N_{i}^-R}{N_{i}^-+1}$ at which the function $\frac{1}{(R-\epsilon)\epsilon^{N_{i}^-}}$ has minimum for each $i=1,2,\dots,n$. From $(C5)$ we have  $\mu>\displaystyle{\max_{i=1,2,\dots,n}\left\{\frac{b}{f_i(b,b,\dots,b)}\frac{\Lambda_i N_{i}^-R^{N_{i}^+-1}}{(R-\epsilon_i)\epsilon_i{^{N_{i}^-}}}\right\}}=\mu_*$. Hence we can choose $l,m>1$ such that
\begin{equation}\label{3.4}
\mu>\frac{lmb\Lambda_i N_{i}^-R^{N_{i}^+-1}}{f_i(b,b,\dots,b)(R-\epsilon_i)\epsilon_i^{N_{i}^-}}.
\end{equation}
holds for $i=1,2,\dots,n$,
confirming that $\tilde{\psi}$ is a  strict subsolution for $\mu>\mu_*$. Since $\tilde{\psi}_i(0)>d(0)=b\rho(0)=b>a=\|\tilde{\phi}\|_{\infty}$, it follows that $\tilde{\psi}_i\nleq\tilde{\phi}_i$. By Theorem \ref{Th3.2} we can select a sufficiently small  subsolution $\psi$ and a sufficiently large supersolution $\phi$ ensuring that
 $\psi\leq \tilde{\psi}\leq \phi$, $\psi\leq\tilde{\phi}\leq \phi$. Therefore, applying Theorem \ref{Th2.5} we conclude that there exist at least three positive solutions for $\mu\in(\mu_*,\mu^*)$.\\

\noindent Now, for a general bounded domain $\Omega$, let $B_R$ be the largest inscribed ball within $\Omega$. To establish the existence of three positive solution we utilize the sub and supersolutions constructed in $B_R$ and extend them by zero in $\Omega\setminus B_R$ (ensuring regularity). It is straightforward to verify  that this approach yields  at least three positive solutions for $\mu\in(\mu_*,\mu^*)$.
\end{proof}
\section{Example}
\noindent Here, we present an example that meets the assumptions of Theorems \ref{Th3.1}-\ref{Th3.2}. when the operator is the Laplacian, the following  problem appears in combustion theory and has been extensively studied by various authors e.g. Parks\cite{PJR}. Consider
\begin{equation}\label{exp}
\left\{
\begin{split}
-\mathcal{M}_{\lambda_1,\Lambda_1}^+(D^2u_1)&=\mu\left(u_1^{\alpha_1}+e^{\frac{\tau u_2}{\tau+u_2}}-1+u_3^{\alpha_3}\dots+u_n^{\alpha_n}\right);&&~~~\rm{in}~\Omega ,\\
-\mathcal{M}_{\lambda_2,\Lambda_2}^+(D^2u_2)&=\mu\left(u_1^{\alpha_1}+u_2^{\alpha_2}+e^{\frac{\tau u_3}{\tau+u_3}}-1+\dots+u_n^{\alpha_n}\right);&&~~~\rm{in}~\Omega,\\
\vdots&\vdots\\
-\mathcal{M}_{\lambda_n,\Lambda_n}^+(D^2u_n)&=\mu\left(e^{\frac{\tau u_1}{\tau+u_1}}-1+u_2^{\alpha_2}+u_3^{\alpha_3}\dots+u_n^{\alpha_n}\right);&&~~~\rm{in}~\Omega,\\
u_1=u_2=\dots&=0;&&~~~\rm{on}~~\partial\Omega,
\end{split}
\right.
\end{equation}
where $\alpha_i\in(0,1)$ for $i=1,2,\dots,n$, $\tau>0$.Here the function  $f_i(u_1,u_2,\dots,u_n)$ is given by 
$f_i(u_1,u_2,\dots,u_n)=e^{\frac{\tau u_i}{\tau+u_i}}-1+\sum_{j\neq i} u_j^{\alpha_j} $ for $i=1,2,\dots,n$ and $j=1,2,\dots,n$ , it is evident that $f_i(0,0,\dots,0)=0,$ the derivatives $f_i's $ are non-decreasing and the function $f$ exhibits  sublinear growth at infinity.Consequently it satisfies the conditions (C1),(C2) \& (C3), ensuring the applicability of  Theorem \ref{Th3.1}-\ref{Th3.2}. Furthermore, for sufficiently large  $\tau\gg 1,$ by choosing $a=1$, $b=\tau$, we obtain $\mu^*=\displaystyle{\min_{i=1,2,\dots,n}\left\{\frac{1}{\|e_i\|_{\infty}}\left\{\frac{a} {f_i(a,a,\dots,a)}\right\}\right\}}=\frac{1}{(n-1)}\displaystyle{\min_{1,2,\dots,n}\left\{\frac{1}{\|e_i\|_{\infty}}\right\}}$, \\ and
$\mu_*=\displaystyle{\max_{1,2,\dots,n}\left\{A_i\frac{\tau}{f_i (\tau,\tau,\dots,\tau)}\right\}=\max\left\{\frac{1}{\frac {e^{0.5\tau }-1}{\tau}+\Sigma_{j\neq i} 
 \tau^{\alpha_j}}A_i\right\}}\to 0$,. Consequently, $\mu_*<\mu^*$ for $\tau\gg 1$. Thus, the functions  $f_i$'s in \eqref{exp} also satisfies (C4).Consequently,  for $\mu\in (\mu_*,\mu^*),$ Theorem \ref{Th3.3} guarantees that \eqref{exp} has at least three positive solutions.
\bibliography{MultiGenSys.bib}
\bibliographystyle{abbrv}

\end{document}